\documentclass{birkmult}
\usepackage{hyperref, url}
\usepackage{soul,color}
%
%
%
 \newtheorem{theorem}{Theorem}[section]
 \newtheorem{lemma}[theorem]{Lemma}
 \numberwithin{equation}{section}

\begin{document}
%
%
%
%
%
%
%
%
%
\title[On an Interesting Class of Variable Exponents]
{On an Interesting Class of Variable Exponents}
\author[A.~Yu.~Karlovich]{Alexei Yu. Karlovich}
\address{
Departamento de Matem\'atica\\
Faculdade de Ci\^encias e Tecnologia\\
Universidade Nova de Lisboa\\
Quinta da Torre\\
2829--516 Caparica\\
Portugal}
\email{oyk@fct.unl.pt}

\author[I.~M.~Spitkovsky]{Ilya M. Spitkovsky}
\address{
Department of Mathematics\\
College of William \& Mary\\
Williamsburg, VA, 23187-8795\\
U.S.A.} \email{ilya@math.wm.edu}

\subjclass{Primary 42B25; Secondary 46E30, 26A16}

\keywords{%
Variable Lebesgue space,
variable exponent,
globally log-H\"older continuous function,
Hardy-Littlewood maximal operator.}

\dedicatory{To Professor Stefan Samko on the occasion of
his 70th birthday}
\begin{abstract}
Let $\mathcal{M}(\mathbb{R}^n)$ be the class of functions
$p:\mathbb{R}^n\to[1,\infty]$ bounded away from one and infinity
and such that the Hardy-Littlewood maximal function is bounded on
the variable Lebesgue space $L^{p(\cdot)}(\mathbb{R}^n)$. We
denote by $\mathcal{M}^*(\mathbb{R}^n)$ the class of variable
exponents $p\in\mathcal{M}(\mathbb{R}^n)$ for which
$1/p(x)=\theta/p_0+(1-\theta)/p_1(x)$ with some
$p_0\in(1,\infty)$, $\theta\in(0,1)$, and
$p_1\in\mathcal{M}(\mathbb{R}^n)$. Rabinovich and Samko
\cite{RS08} observed that each globally log-H\"older continuous
exponent belongs to $\mathcal{M}^*(\mathbb{R}^n)$. We show that
the class $\mathcal{M}^*(\mathbb{R}^n)$ contains many interesting
exponents beyond the class of globally log-H\"older continuous
exponents.
\end{abstract}
\maketitle
\section{Introduction}
Let $p:\mathbb{R}^n\to[1,\infty]$ be a measurable a.e. finite function. By
$L^{p(\cdot)}(\mathbb{R}^n)$ we denote the set of all complex-valued
functions $f$ on $\mathbb{R}^n$ such that
\[
I_{p(\cdot)}(f/\lambda):=\int_{\mathbb{R}^n} |f(x)/\lambda|^{p(x)} dx <\infty
\]
for some $\lambda>0$. This set becomes a Banach space when
equipped with the norm
\[
\|f\|_{p(\cdot)}:=\inf\big\{\lambda>0: I_{p(\cdot)}(f/\lambda)\le 1\big\}.
\]
It is easy to see that if $p$ is constant, then $L^{p(\cdot)}(\mathbb{R}^n)$ is nothing but
the standard Lebesgue space $L^p(\mathbb{R}^n)$. The space $L^{p(\cdot)}(\mathbb{R}^n)$
is referred to as a \textit{variable Lebesgue space}.
We will always suppose that
\begin{equation}\label{eq:exponents}
1<p_-:=\operatornamewithlimits{ess\,inf}_{x\in\mathbb{R}^n}p(x),
\quad
\operatornamewithlimits{ess\,sup}_{x\in\mathbb{R}^n}p(x)=:p_+<\infty.
\end{equation}
Under these conditions, the space $L^{p(\cdot)}(\mathbb{R}^n)$ is
separable and reflexive, and its dual is isomorphic to
$L^{p'(\cdot)}(\mathbb{R}^n)$, where
\[
1/p(x)+1/p'(x)=1 \quad(x\in\mathbb{R}^n)
\]
(see e.g.  \cite[Chap.~3]{DHHR11}).

Given $f\in L_{\rm loc}^1(\mathbb{R}^n)$, the Hardy-Littlewood maximal operator is
defined by
\[
Mf(x):=\sup_{Q\ni x}\frac{1}{|Q|}\int_Q|f(y)|dy
\]
where the supremum is taken over all cubes
$Q\subset\mathbb{R}^n$ containing $x$ (here, and throughout,
cubes will be assumed to have their sides parallel to the coordinate
axes). By $\mathcal{M}(\mathbb{R}^n)$ denote the set of all
measurable functions $p:\mathbb{R}^n\to[1,\infty]$ such that
\eqref{eq:exponents} holds and the Hardy-Littlewood maximal
operator is bounded on $L^{p(\cdot)}(\mathbb{R}^n)$.

Assume that \eqref{eq:exponents} is fulfilled. L.~Diening~\cite{D04}
proved that if $p$ satisfies
\begin{equation}\label{eq:log-Hoelder}
|p(x)-p(y)|\le\frac{c}{\log(e+1/|x-y|)} \quad (x,y\in\mathbb{R}^n)
\end{equation}
and $p$ is constant outside some ball, then
$p\in\mathcal{M}(\mathbb{R}^n)$. Further, the behavior of $p$ at
infinity was relaxed by D.~Cruz-Uribe, A.~Fiorenza, and
C.~Neugebauer \cite{CFN03,CFN04}, who showed that if $p$ satisfies
\eqref{eq:log-Hoelder} and there exists a $p_\infty>1$ such that
\begin{equation}\label{eq:Hoelder-infinity}
|p(x)-p_\infty|\le\frac{c}{\log(e+|x|)}\quad(x\in\mathbb{R}^n),
\end{equation}
then $p\in\mathcal{M}(\mathbb{R}^n)$. Following
\cite[Section~4.1]{DHHR11}, we will say that if conditions
\eqref{eq:log-Hoelder}--\eqref{eq:Hoelder-infinity} are fulfilled, then
$p$ is {\em globally log-H\"older continuous}.

A.~Nekvinda~\cite{N04,N08} relaxed condition
\eqref{eq:Hoelder-infinity}. To formulate his results, we will need the
notion of iterated logarithms. Put
\[
e_0:=1,\quad e_{k+1}:=\exp(e_k)\quad\mbox{for}\quad
k\in\mathbb{Z}_+:=\{0,1,2,\dots\}.
\]
The function $\log_k x$ is defined on the interval $(e_k,\infty)$
by
\[
\log_0 x:=x,\quad\log_{k+1}x:=\log(\log_k x)\quad\mbox{for}\quad
k\in\mathbb{Z}_+.
\]
For $\alpha>0$ and $k\in\mathbb{Z}_+$, put
\[
b_{k,\alpha}(x):=-\frac{1}{\alpha}\frac{d}{dx}(\log_k^{-\alpha}x)
\quad (x\ge e_k).
\]
We say that a measurable function $p:\mathbb{R}^n\to[1,\infty]$ belongs to
the Nekvinda class $\mathcal{N}(\mathbb{R}^n)$ if conditions
\eqref{eq:exponents}--\eqref{eq:log-Hoelder} are fulfilled and
there exists a monotone function $s:[0,\infty)\to[1,\infty)$
satisfying
\begin{equation}\label{eq:N1}
1<\inf_{x\in[0,\infty)}s(x), \quad
\sup_{x\in[0,\infty)}s(x)<\infty,
\end{equation}
and such that for some $K>0$, $k\in\mathbb{N}$, $\alpha>0$,
\begin{equation}\label{eq:N2}
\left|\frac{ds}{dx}(x)\right|\le
Kb_{k,\alpha}(x)\quad\mbox{for}\quad x\ge e_k,
\end{equation}
and
\begin{equation}\label{eq:N3}
\int_{\{x\in\mathbb{R}^n:p(x)\ne s(|x|)\}}c^{1/|p(x)-s(|x|)|}dx<\infty
\end{equation}
for some $c>0$. According to \cite[Theorem~2.2]{N08},
$\mathcal{N}(\mathbb{R}^n)\subset\mathcal{M}(\mathbb{R}^n)$. In
particular, all locally log-H\"older continuous (that is,
satisfying \eqref{eq:log-Hoelder}) radially monotone exponents
$p(x)=s(|x|)$ with monotone $s$ satisfying
\eqref{eq:N1}--\eqref{eq:N2} belong to
$\mathcal{M}(\mathbb{R}^n)$.

Observe, however, that A.~Lerner \cite{L05} (see also
\cite[Example~5.1.8]{DHHR11}) constructed exponents $p$
discontinuous at zero or at infinity and such that, nevertheless,
$p$ belong to $\mathcal{M}(\mathbb{R}^n)$. Thus neither
\eqref{eq:log-Hoelder} nor \eqref{eq:Hoelder-infinity} is
necessary for $p\in\mathcal{M}(\mathbb{R}^n)$. For more
informastion on the class $\mathcal{M}(\mathbb{R}^n)$ we refer to
\cite[Chaps.~4--5]{DHHR11}.

Finally, we note that V.~Kokilashvili and S.~Samko \cite{KS04,KS08};
V.~Kokilashvili, N.~Samko, and S.~Samko \cite{KSS06}; D.~Cruz-Uribe,
L.~Diening, and P.~H\"ast\"o \cite{CDH11} studied the boundedness
of the Hardy-Littlewood maximal operator on variable Lebesgue
spaces with weights under assumptions
\eqref{eq:exponents}--\eqref{eq:Hoelder-infinity} or their analogues in
the case of metric measure spaces.

We denote by $\mathcal{M}^*(\mathbb{R}^n)$ the collection of all variable exponents $p\in\mathcal{M}(\mathbb{R}^n)$
for which there exist constants $p_0\in(1,\infty)$, $\theta\in(0,1)$, and
a variable exponent $p_1\in\mathcal{M}(\mathbb{R}^n)$ such that
\begin{equation}\label{eq:definition}
\frac{1}{p(x)}=\frac{\theta}{p_0}+\frac{1-\theta}{p_1(x)}
\end{equation}
for almost all $x\in\mathbb{R}^n$.

This class implicitly appeared in V.~Rabinovich and S.~Samko's
paper~\cite{RS08} (see also \cite{S10}). Its introduction is
motivated by the fact that the boundedness of the Hardy-Littlewood
maximal operator on $L^{p_1(\cdot)}(\mathbb{R}^n)$ implies the
boundedness of many important linear operators on
$L^{p_1(\cdot)}(\mathbb{R}^n)$ (see e.g. \cite[Chap.~6]{DHHR11}).
If such a linear operator is also compact on the standard Lebesgue
space $L^{p_0}(\mathbb{R}^n)$, then, by a Krasnoselskii type
interpolation theorem for variable Lebesgue spaces, it is compact
on the variable Lebesgue space $L^{p(\cdot)}(\mathbb{R}^n)$ as
well.

In \cite[Theorem~5.1]{RS08}, the boundedness of the
pseudodifferential operators with symbols in the H\"ormander class
$S_{1,0}^0$ on the variable Lebesgue spaces
$L^{p(\cdot)}(\mathbb{R}^n)$ was established, provided that $p$
satisfies \eqref{eq:exponents}--\eqref{eq:Hoelder-infinity}. Then
the above interpolation argument was used in the proof of
\cite[Theorem~6.1]{RS08} to study the Fredholmness of
pseudodifferential operators with slowly oscillating symbols on
$L^{p(\cdot)}(\mathbb{R}^n)$. In particular, the following is
implicitly contained in the proof of \cite[Theorem~6.1]{RS08}.
\begin{theorem}[V.~Rabinovich-S.~Samko]
\label{th:RS}
If $p:\mathbb{R}^n\to[1,\infty]$ satisfies \eqref{eq:exponents}--\eqref{eq:Hoelder-infinity},
then $p$ belongs to $\mathcal{M}^*(\mathbb{R}^n)$.
\end{theorem}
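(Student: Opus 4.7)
The plan is to construct $p_0$, $\theta$, and $p_1$ by hand from the equation
\[
\frac{1}{p_1(x)}=\frac{1}{1-\theta}\left(\frac{1}{p(x)}-\frac{\theta}{p_0}\right),
\]
so that \eqref{eq:definition} holds by definition. Everything then reduces to choosing $p_0$ and $\theta$ so that the resulting $p_1$ again satisfies \eqref{eq:exponents}--\eqref{eq:Hoelder-infinity}, whence $p_1\in\mathcal{M}(\mathbb{R}^n)$ by Cruz-Uribe--Fiorenza--Neugebauer.

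First, I would pick any $p_0\in(1,\infty)$ and then choose $\theta\in(0,1)$ so small that both $\theta/p_0<1/p_+$ and $\theta(1-1/p_0)<1-1/p_-$ hold. These inequalities are compatible since $p_->1$ and $p_+<\infty$. Combined with \eqref{eq:exponents} they guarantee $0<1/p_1(x)<1$ almost everywhere, and more precisely
\[
0<\frac{1/p_+-\theta/p_0}{1-\theta}\le \frac{1}{p_1(x)}\le \frac{1/p_--\theta/p_0}{1-\theta}<1,
\]
so $p_1$ satisfies \eqref{eq:exponents}.

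Next, I would verify that $p_1$ is globally log-H\"older continuous. Write $p_1(x)=F(1/p(x))$ where $F(t):=(1-\theta)/(t-\theta/p_0)$. Because $1/p(x)$ stays in the compact interval $[1/p_+,1/p_-]$, on which $F$ is smooth with bounded derivative, we get
\[
|p_1(x)-p_1(y)|\le C|1/p(x)-1/p(y)|\le \frac{C}{p_-^2}|p(x)-p(y)|,
\]
so \eqref{eq:log-Hoelder} for $p$ transfers to \eqref{eq:log-Hoelder} for $p_1$ with a new constant. In the same way, taking $p_{1,\infty}:=F(1/p_\infty)$, one obtains \eqref{eq:Hoelder-infinity} for $p_1$ from \eqref{eq:Hoelder-infinity} for $p$. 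It then follows from the results of Diening and Cruz-Uribe--Fiorenza--Neugebauer cited in the introduction that $p_1\in\mathcal{M}(\mathbb{R}^n)$, which completes the proof.

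I do not anticipate any serious obstacle: the only delicate point is arranging the inequalities for $\theta$ so that $p_1$ has $(p_1)_->1$ and $(p_1)_+<\infty$, and this is a straightforward arithmetic check using $p_->1$. The transfer of log-H\"older continuity from $p$ to $p_1$ is automatic once one observes that $p_1$ is a smooth bounded function of $1/p$.
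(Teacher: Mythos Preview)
Your argument is correct and follows essentially the same route as the paper's proof: solve \eqref{eq:definition} for $p_1$, choose $p_0$ and $\theta$ so that $p_1$ satisfies \eqref{eq:exponents}, and transfer the log-H\"older conditions to $p_1$ via a Lipschitz estimate. The only cosmetic differences are that the paper fixes $p_0\ge p_+$ and $\theta\in(0,1-1/p_-)$ and then computes the Lipschitz constant explicitly as $p_0^2(1-\theta)$ from the formula $p_1(x)=p_0(1-\theta)p(x)/(p_0-\theta p(x))$, whereas you allow an arbitrary $p_0\in(1,\infty)$, take $\theta$ small, and invoke smoothness of $F(t)=(1-\theta)/(t-\theta/p_0)$ on the compact range of $1/p$.
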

Recently we generalized \cite[Theorem~5.1]{RS08} and proved that
the pseudodifferential operators with symbols in the H\"ormander
class $S_{\rho,\delta}^{n(\rho-1)}$, where $0\le\delta<1$ and
$0<\rho\le 1$, are bounded on variable Lebesgue spaces
$L^{p(\cdot)}(\mathbb{R}^n)$ whenever
$p\in\mathcal{M}(\mathbb{R}^n)$  (see \cite[Theorem~1.2]{KS11}).
Further, \cite[Theorem~1.3]{KS11} delivers a sufficient condition
for the Fredholmness of pseudodifferential operators with slowly
oscillating symbols in the H\"ormander class $S_{1,0}^0$ under the
assumption that $p\in\mathcal{M}^*(\mathbb{R}^n)$. The proof
follows the same lines as V.~Rabinovich and S.~Samko's proof of
\cite[Theorem~6.1]{RS08} for exponents satisfying
\eqref{eq:exponents}--\eqref{eq:Hoelder-infinity} and is based on
the above mentioned interpolation argument.

The aim of this paper is to show that the class
$\mathcal{M}^*(\mathbb{R}^n)$ is much larger than the class of
globally log-H\"older continuous exponents. Our first result says that
all Nekvinda's exponents belong to $\mathcal{M}^*(\mathbb{R}^n)$.
\begin{theorem}\label{th:Nekvinda}
We have
$\mathcal{N}(\mathbb{R}^n)\subset\mathcal{M}^*(\mathbb{R}^n)$.
\end{theorem}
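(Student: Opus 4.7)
The plan is to realize the decomposition \eqref{eq:definition} with $p_1$ itself in $\mathcal{N}(\mathbb{R}^n)$, so that the inclusion $\mathcal{N}(\mathbb{R}^n)\subset\mathcal{M}(\mathbb{R}^n)$ from \cite[Theorem~2.2]{N08} automatically furnishes $p_1\in\mathcal{M}(\mathbb{R}^n)$. Let $s$, $K$, $k$, $\alpha$, $c$ denote the data attached to $p$ in \eqref{eq:N1}--\eqref{eq:N3}; I may assume $c\in(0,1)$, since the integrand in \eqref{eq:N3} is monotone increasing in $c$ and hence \eqref{eq:N3} holds for every smaller value. Solving \eqref{eq:definition} pointwise for $p_1$ forces
\[
\frac{1}{p_1(x)}=\frac{1}{1-\theta}\left(\frac{1}{p(x)}-\frac{\theta}{p_0}\right),
\]
and I define the candidate radial majorant $s_1:[0,\infty)\to[1,\infty)$ by the same affine rule applied to $s$.

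I first choose the parameters $p_0\in(1,\infty)$ and $\theta\in(0,1)$. Any $p_0$ exceeding $\max\{p_+,\sup s\}$ followed by a sufficiently small $\theta$ (a short computation using $p_->1$ and $\inf s>1$) guarantees that both $p_1$ and $s_1$ take values in a compact subinterval of $(1,\infty)$. With these parameters fixed, the local log-H\"older condition \eqref{eq:log-Hoelder} for $p_1$, the bounds \eqref{eq:N1} for $s_1$ together with inheritance of monotonicity from $s$, and the derivative estimate \eqref{eq:N2} for $s_1$ (with the same $k,\alpha$ and a new constant $K_1$) all follow from the two identities
\[
|p_1(x)-p_1(y)|=\frac{p_1(x)p_1(y)}{(1-\theta)p(x)p(y)}\,|p(x)-p(y)|,\qquad s_1'(r)=\frac{s_1(r)^2}{(1-\theta)s(r)^2}\,s'(r),
\]
since $p,p_1,s,s_1$ are bounded between positive constants, and the extra factors are absorbed into new constants.

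The main step is condition \eqref{eq:N3} for the pair $(p_1,s_1)$. The exceptional set $\{x:p_1(x)\ne s_1(|x|)\}$ coincides with $\{x:p(x)\ne s(|x|)\}$, and an analogous algebraic manipulation gives
\[
|p_1(x)-s_1(|x|)|=\frac{p_1(x)s_1(|x|)}{(1-\theta)p(x)s(|x|)}\,|p(x)-s(|x|)|\le C_2\,|p(x)-s(|x|)|
\]
with a constant $C_2>0$ independent of $x$. Setting $c_1:=c^{C_2}$, which lies in $(0,1)$ since $c\in(0,1)$, the monotonicity of $a\mapsto c_1^{1/a}$ on $(0,\infty)$ yields $c_1^{1/|p_1(x)-s_1(|x|)|}\le c^{1/|p(x)-s(|x|)|}$, and integration over the exceptional set gives \eqref{eq:N3} for $(p_1,s_1)$ with constant $c_1$. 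Consequently $p_1\in\mathcal{N}(\mathbb{R}^n)\subset\mathcal{M}(\mathbb{R}^n)$, and $p\in\mathcal{M}^*(\mathbb{R}^n)$. The principal technical hurdle is precisely this last comparison, which exploits both the joint $L^\infty$ bounds on the four functions $p,p_1,s,s_1$ and the preliminary normalization $c\in(0,1)$ that lets $c^{C_2}$ remain in the admissible range $(0,1)$.
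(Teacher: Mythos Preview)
Your argument is correct and follows the same strategy as the paper: pick $p_0$ large and $\theta$ small so that both $p_1$ and the transformed radial function $s_1$ stay in a compact subinterval of $(1,\infty)$, then verify $p_1\in\mathcal{N}(\mathbb{R}^n)$ by checking \eqref{eq:exponents}--\eqref{eq:log-Hoelder} and \eqref{eq:N1}--\eqref{eq:N3}. Your handling of \eqref{eq:N3}---normalizing $c\in(0,1)$ and passing to $c_1=c^{C_2}$---is in fact a bit more transparent than the paper's inequality \eqref{eq:Nekvinda-6}, which is stated with the same constant $c$ on both sides.
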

Modifying A.~Lerner's example \cite{L05}, we further prove that there
are exponents in $\mathcal{M}^*(\mathbb{R}^n)$ that do not satisfy
\eqref{eq:Hoelder-infinity}.
\begin{theorem}\label{th:example}
There exists a sufficiently small $\varepsilon>0$ such that for
every $\alpha,\beta$ satisfying $0<\beta<\alpha\le\varepsilon$ the
function
\[
p(x)=2+\alpha+\beta\sin\big(\log(\log|x|)\chi_{\{x\in\mathbb{R}^n:|x|\ge e\}}(x)\big)
\quad (x\in\mathbb{R}^n)
\]
belongs to $\mathcal{M}^*(\mathbb{R}^n)$.
\end{theorem}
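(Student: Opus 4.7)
The plan is to exhibit an explicit decomposition of $1/p$ of the form~\eqref{eq:definition}. Set $p_0 := 2 \in (1, \infty)$ and let $\theta \in (0, 1)$ be a small parameter to be fixed. Requiring $1/p(x) = \theta/p_0 + (1-\theta)/p_1(x)$ forces
\[
p_1(x) = \frac{2(1-\theta)\, p(x)}{2 - \theta\, p(x)}.
\]
For $\varepsilon$ and $\theta$ small we have $2 - \theta\, p(x) > 0$ everywhere, so $p_1$ is a well-defined positive variable exponent with $1 < (p_1)_- \le (p_1)_+ < \infty$. The task then reduces to showing that both $p$ and $p_1$ lie in $\mathcal{M}(\mathbb{R}^n)$.

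The exponent $p$ is smooth away from the origin and bounded away from $1$, but fails~\eqref{eq:Hoelder-infinity} because $\sin(\log\log|x|)$ persistently oscillates as $|x|\to\infty$. This oscillation is on the very slow $\log\log$ scale, which is exactly the regime exploited by A.~Lerner in~\cite{L05} (see also~\cite[Example~5.1.8]{DHHR11}) to produce discontinuous exponents in $\mathcal{M}(\mathbb{R}^n)$. The strategy is to adapt Lerner's argument to the present smooth setting: on each annular shell over which $p$ varies by a small amount, bound the maximal operator by a constant-exponent maximal operator with exponent close to $2+\alpha$, then patch the shell-wise estimates together using the slow rate of oscillation. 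Since $p_1$ has the same qualitative structure --- it is smooth, bounded away from $1$, and depends on $x$ only through the slowly oscillating function $\sin(\log\log|x|)$, via the smooth monotone M\"obius transformation $t \mapsto 2(1-\theta)t/(2-\theta t)$ applied pointwise to $p$ --- the same adaptation yields $p_1 \in \mathcal{M}(\mathbb{R}^n)$ for $\varepsilon$ and $\theta$ small enough.

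The main obstacle is the quantitative extension of Lerner's oscillation-based argument from discontinuous to smooth exponents, with uniform tracking of the constants in terms of $\alpha$, $\beta$, and $\theta$. Smoothness should only simplify matters, but one must choose $\varepsilon$ (and then $\theta$) small enough that the shell-estimates close simultaneously for both $p$ and $p_1$, keeping $(p_1)_->1$ and the amplitude of $p_1$ under control. Once this is achieved, the decomposition $1/p = \theta/2 + (1-\theta)/p_1$ witnesses $p \in \mathcal{M}^*(\mathbb{R}^n)$.
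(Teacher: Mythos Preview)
Your overall architecture matches the paper's: take $p_0=2$, define $p_1$ via the M\"obius-type relation $p_1(x)=2(1-\theta)p(x)/(2-\theta p(x))$, and then verify that both $p$ and $p_1$ lie in $\mathcal{M}(\mathbb{R}^n)$. The gap is entirely in that last step. Your plan to ``bound the maximal operator by a constant-exponent maximal operator on each annular shell and patch the shell-wise estimates together'' is not a method that is known to work --- the maximal operator is nonlocal, and controlling it shell by shell does not yield a global bound on $L^{p(\cdot)}$. This is neither Lerner's argument nor anyone else's, and as stated it is only a heuristic.

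What the paper actually does is invoke Lerner's ready-made criterion \cite[Theorem~1.2]{L05}: there is a dimensional constant $\mu_n$ such that if $q_->0$ and $\|q\|_{L^\infty}+\sup_Q \ell(Q)\,\Omega(q,Q)\le\mu_n$, then $2+q\in\mathcal{M}(\mathbb{R}^n)$. Writing $p=2+F\circ L$ and $p_1=2+G\circ L$ with $L(x)=\log(\log|x|)\chi_{\{|x|\ge e\}}$, $F(t)=\alpha+\beta\sin t$, and $G(t)=2F(t)/(2-\theta(2+F(t)))$, one uses Lerner's own estimate $\sup_Q\ell(Q)\,\Omega(L,Q)=:C_L<\infty$ \cite[Proposition~4.2]{L05} together with the elementary fact $\Omega(H\circ L,Q)\le 2\,\mathrm{Lip}(H)\,\Omega(L,Q)$. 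The Lipschitz constants of $F$ and $G$ are $\beta$ and at most $4\beta$ (the latter after choosing $\theta=1/(2+\alpha+\beta)$, not ``small''), so both oscillation quantities are $O(\beta)$ and the $L^\infty$ norms are $O(\alpha)$; taking $\varepsilon=\mu_n/(8(1+C_L))$ then places both $q$ and $q_1$ under the threshold $\mu_n$. No new adaptation of Lerner's maximal-function argument is required --- his theorem is applied as a black box.
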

The paper is organized as follows. For completeness, we give a proof
of Theorem~\ref{th:RS} in Section~\ref{sec:RS}. Further, in
Section~\ref{sec:Nekvinda} we prove Theorem~\ref{th:Nekvinda}.
Section~\ref{sec:Lerner} contains A.~Lerner's sufficient condition for
$2+q\in\mathcal{M}(\mathbb{R}^n)$ in terms of mean oscillations of
a function $q$. In Section~\ref{sec:proof} we show that
Theorem~\ref{th:example} follows from the results of
Section~\ref{sec:Lerner}.
\section{Nekvinda's exponents}
\subsection{Globally log-H\"older continuous exponents are contained in \boldmath{$\mathcal{M}^*(\mathbb{R}^n)$}}
\label{sec:RS} In this subsection we give a proof of
Theorem~\ref{th:RS}. A part of this proof will be used in the
proof of Theorem~\ref{th:Nekvinda} in the next subsection.
\begin{proof}[Proof of Theorem~{\rm\ref{th:RS}}]
Suppose $p$ satisfies \eqref{eq:exponents}--\eqref{eq:Hoelder-infinity}. Let
$p_0\in(1,\infty)$, $\theta\in(0,1)$, and $p_1$ be such that \eqref{eq:definition}
holds. Then
\begin{equation}\label{eq:RS-1}
p_1(x)=\frac{p_0(1-\theta)p(x)}{p_0-\theta p(x)}.
\end{equation}
If we choose $p_0\ge p_+$, then for $x\in\mathbb{R}^n$,
\begin{equation}\label{eq:RS-2}
1<p_0-\theta p_+\le p_0-\theta p(x)\le p_0-\theta p_-<p_0.
\end{equation}
Therefore
\[
(1-\theta)p_-\le (1-\theta)p(x)\le p_1(x)\le p_0(1-\theta)p(x)\le p_0(1-\theta)p_+.
\]
Hence $(1-\theta)p_-\le (p_1)_-$ and $(p_1)_+\le p_0(1-\theta)p_+<\infty$. If we choose
$\theta$ such that $\theta\in(0,1-1/p_-)$, then $1<(p_1)_-$ and thus $p_1$ satisfies
\eqref{eq:exponents}.

From \eqref{eq:RS-1} it follows that
\[
p_1(x)-p_1(y)=\frac{p_0^2(1-\theta)(p(x)-p(y))}{(p_0-\theta p(x))(p_0-\theta p(y))}
\quad(x,y\in\mathbb{R}^n).
\]
Then, taking into account \eqref{eq:RS-2}, we get
\begin{equation}\label{eq:RS-3}
|p_1(x)-p_1(y)|\le p_0^2(1-\theta)|p(x)-p(y)|
\quad(x,y\in\mathbb{R}^n).
\end{equation}
Now put
\[
(p_1)_\infty:=\frac{p_0(1-\theta)p_\infty}{p_0-\theta p_\infty},
\]
where $p_\infty$ is the constant from \eqref{eq:Hoelder-infinity}.
Then
\begin{equation}\label{eq:RS-4}
p_1(x)-(p_1)_\infty=\frac{p_0^2(1-\theta)(p(x)-p_\infty)}{(p_0-\theta p(x))(p_0-\theta p_\infty)}
\quad (x\in\mathbb{R}^n).
\end{equation}
From \eqref{eq:Hoelder-infinity} it follows that
\[
p_\infty=\lim_{|x|\to\infty}p(x).
\]
Hence $p_\infty\in[p_-,p_+]$. Therefore
\begin{equation}\label{eq:RS-5}
1<p_0-\theta p_+\le p_0-\theta p_\infty\le p_0-\theta p_-<p_0.
\end{equation}
From \eqref{eq:RS-4} and \eqref{eq:RS-5} we obtain
\begin{equation}\label{eq:RS-6}
|p_1(x)-(p_1)_\infty|\le p_0^2(1-\theta)|p(x)-p_\infty|.
\end{equation}
From estimates \eqref{eq:RS-3}, \eqref{eq:RS-6} and
\eqref{eq:log-Hoelder}, \eqref{eq:Hoelder-infinity} for the
exponent $p$ we obtain that the exponent $p_1$ satisfies
\eqref{eq:log-Hoelder} and \eqref{eq:Hoelder-infinity}. Therefore
$p_1\in\mathcal{M}(\mathbb{R}^n)$ and thus $p$ belongs to $\mathcal{M}^*(\mathbb{R}^n)$.
\end{proof}
\subsection{Proof of Theorem~\ref{th:Nekvinda}}\label{sec:Nekvinda}
Suppose $p\in\mathcal{N}(\mathbb{R}^n)$. Let $p_0\in(1,\infty)$, $\theta\in(0,1)$,
and $p_1$ be such that \eqref{eq:definition} holds. In the previous subsection
we proved that if $p_0\ge p_+$ and $\theta\in(0,1-1/p_-)$, then $p_1$
satisfies \eqref{eq:exponents}--\eqref{eq:log-Hoelder}.

Since $p\in\mathcal{N}(\mathbb{R}^n)$, there exists a monotone function
$s:[0,\infty)\to[1,\infty)$ such that \eqref{eq:N1}--\eqref{eq:N3} are
fulfilled. Let
\begin{equation}\label{eq:Nekvinda-1}
s_1(x):=\frac{p_0(1-\theta)s(x)}{p_0-\theta s(x)}
\quad(x\ge 0).
\end{equation}
Put
\begin{equation}\label{eq:Nekvinda-2}
s_-:=\inf_{x\in[0,\infty)}s(x),
\quad
s_+:=\sup_{x\in[0,\infty)}s(x).
\end{equation}
We will choose $p_0$ and $\theta$ subject to
\[
p_0\ge\max\{p_+,s_+\},
\quad
\theta\in(0,\min\{1-1/p_-,1-1/s_-\}).
\]
Then, for $x\in[0,\infty)$,
\begin{equation}\label{eq:Nekvinda-3}
1<p_0-\theta s_+\le p_0-\theta s(x)\le p_0-\theta s_-<p_0.
\end{equation}
Therefore, for $x\in[0,\infty)$,
\[
(1-\theta)s_-\le (1-\theta) s(x)\le s_1(x)\le p_0(1-\theta) s(x)\le p_0(1-\theta)s_+
\]
and
\begin{equation}\label{eq:Nekvinda-4}
1<(1-\theta)s_-\le (s_1)_-,
\quad
(s_1)_+\le p_0(1-\theta)s_+<\infty,
\end{equation}
where $(s_1)_-$ and $(s_1)_+$ are defined by \eqref{eq:Nekvinda-2} with $s_1$
in place of $s$.

If $s(x)\le s(y)$ for $x,y\in[0,\infty)$, then
\[
p_0-\theta s(x)\ge p_0-\theta s(y),
\quad
p_0(1-\theta)s(x)\le p_0(1-\theta) s(y).
\]
Thus
\[
s_1(x)=\frac{p_0(1-\theta)s(x)}{p_0-\theta s(x)}
\le
\frac{p_0(1-\theta)s(y)}{p_0-\theta s(y)}=s_1(y),
\]
that is, $s_1$ is monotone.

It is easy to see that for almost all $x>0$,
\[
\frac{ds_1}{dx}(x)=\frac{p_0^2(1-\theta)}{(p_0-\theta s(x))^2}\,\frac{ds}{dx}(x).
\]
Taking into account \eqref{eq:Nekvinda-3}, we obtain
\begin{equation}\label{eq:Nekvinda-5}
\left|\frac{ds_1}{dx}(x)\right|\le p_0^2\left|\frac{ds}{dx}(x)\right|
\quad(x>0).
\end{equation}
From \eqref{eq:RS-1} and \eqref{eq:Nekvinda-1} we get
\[
E:=\{x\in\mathbb{R}^n:p(x)\ne s(|x|)\}=\{x\in\mathbb{R}^n:p_1(x)\ne s_1(|x|)\}
\]
and for $x\in E$,
\[
p_1(x)-s_1(|x|)=\frac{p_0^2(1-\theta)(p(x)-s(|x|))}{(p_0-\theta p(x))(p_0-\theta s(|x|))}.
\]
From this equality and inequalities \eqref{eq:RS-2} and \eqref{eq:Nekvinda-3}
we get for $x\in E$,
\[
(1-\theta)|p(x)-s(|x|)|\le |p_1(x)-s_1(|x|)|\le p_0^2(1-\theta)|p(x)-s(|x|)|.
\]
Therefore, there is a constant $M>0$ such that
\begin{equation}\label{eq:Nekvinda-6}
\int_E c^{1/|p_1(x)-s_1(|x|)|}\,dx\le M\int_E c^{1/|p(x)-s(|x|)|}\,dx.
\end{equation}
Since $s$ satisfies \eqref{eq:N1}--\eqref{eq:N3}, from \eqref{eq:Nekvinda-4}--\eqref{eq:Nekvinda-6}
it follows that $s_1$ satisfies \eqref{eq:N1}--\eqref{eq:N3}, too. Thus
$p_1\in\mathcal{N}(\mathbb{R}^n)$. By \cite[Theorem~2.2]{N08}, $p_1\in\mathcal{M}(\mathbb{R}^n)$,
which finishes the proof of $p\in\mathcal{M}^*(\mathbb{R}^n)$.
\qed
\section{Lerner's exponents}
\subsection{Sufficient condition for \boldmath{$2+q\in\mathcal{M}(\mathbb{R}^n)$}}\label{sec:Lerner}
Let $f\in L^1_{\rm loc}(\mathbb{R}^n)$.  For a cube $Q\subset\mathbb{R}^n$, put
\[
f_Q:=\frac{1}{|Q|}\int_Q f(x)dx.
\]
We recall that the mean oscillation of $f$
over a cube $Q$ is given by
\[
\Omega(f,Q):=\frac{1}{|Q|}\int_Q|f(x)-f_Q|dx.
\]
\begin{lemma}\label{le:Lipschitz-oscillation}
If $F\colon \mathbb{R}\to\mathbb{R}$ is a Lipschitz function with the
Lipschitz constant $c$ and $f\in L^1_{\rm loc}(\mathbb{R}^n)$ is a
real-valued function, then for every $Q\subset\mathbb{R}^n$,
\[
\Omega(F\circ f,Q)\le 2c\Omega(f,Q).
\]
\end{lemma}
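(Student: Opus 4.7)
The plan is to exploit the standard fact that the mean oscillation $\Omega(g,Q)$ is, up to a factor of $2$, dominated by the $L^1$ deviation of $g$ on $Q$ from \emph{any} real number $a$, and then to pick $a = F(f_Q)$ so that the Lipschitz bound on $F$ converts everything back to an integral involving $|f-f_Q|$.

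First I would record the elementary inequality
\[
\Omega(g,Q) \le \frac{2}{|Q|}\int_Q |g(x) - a|\,dx
\qquad (a\in\mathbb{R}),
\]
valid for any $g\in L^1_{\rm loc}(\mathbb{R}^n)$. This follows from the triangle inequality $|g(x)-g_Q|\le |g(x)-a| + |a - g_Q|$ together with the Jensen-type bound
\[
|a - g_Q| = \left|\frac{1}{|Q|}\int_Q (a - g(y))\,dy\right| \le \frac{1}{|Q|}\int_Q |g(y)-a|\,dy.
\]
Integrating in $x$ over $Q$ and dividing by $|Q|$ yields the claimed estimate.

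Next I would apply this inequality with $g = F\circ f$ and with the specific choice $a = F(f_Q)$, a legitimate real number since $f$ is real-valued and locally integrable. This gives
\[
\Omega(F\circ f, Q) \le \frac{2}{|Q|}\int_Q |F(f(x)) - F(f_Q)|\,dx.
\]
The Lipschitz hypothesis on $F$ with constant $c$ immediately bounds the integrand by $c\,|f(x) - f_Q|$, and the desired estimate $\Omega(F\circ f, Q)\le 2c\,\Omega(f,Q)$ follows.

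There is essentially no hard step here: the whole argument is the standard ``replace the mean by a convenient constant'' trick combined with the pointwise Lipschitz inequality. The only point to be mildly careful about is that one must plug in a constant $a$ (here $F(f_Q)$) rather than the actual mean $(F\circ f)_Q$, since the Lipschitz condition lets one compare $F(f(x))$ to $F(\text{constant})$ but not directly to the mean of $F\circ f$; the factor $2$ in the conclusion is precisely the price of this replacement.
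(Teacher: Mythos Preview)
Your proof is correct. The paper's argument is a close cousin but technically different: instead of the ``replace the mean by an arbitrary constant'' inequality, it records the double-integral equivalence
\[
\Omega(f,Q)\le\frac{1}{|Q|^2}\int_Q\int_Q|f(x)-f(y)|\,dx\,dy\le 2\Omega(f,Q),
\]
applies the left inequality to $F\circ f$, uses the Lipschitz bound pointwise on $|F(f(x))-F(f(y))|$, and then applies the right inequality to $f$. Both routes are one-line standard tricks that produce the same factor $2$; yours has the minor advantage of needing only a single integral and a well-chosen constant $a=F(f_Q)$, while the paper's symmetric double-integral form avoids having to name any particular constant.
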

\begin{proof}
It is easy to see that
\[
\Omega(f,Q)\le\frac{1}{|Q|^2}\int_Q\int_Q|f(x)-f(y)|dx\, dy\le 2\Omega(f,Q).
\]
From this estimate we immediately get the statement.
\end{proof}
Given any cube $Q$, let
\[
\ell(Q):=\log(e+\max\{|Q|,|Q|^{-1},|\operatorname{cen}_Q|\}),
\]
where $\operatorname{cen}_Q$ is the center of $Q$.
\begin{lemma}[{see \cite[Proposition~4.2]{L05}}]
\label{le:double-logarithm}
If
\[
L(x):=\log(\log|x|)\chi_{\{x\in\mathbb{R}^n:|x|\ge e\}}(x)\quad (x\in\mathbb{R}^n),
\]
then
\[
\sup_{Q\subset\mathbb{R}^n}\ell(Q)\Omega(L,Q)<\infty.
\]
\end{lemma}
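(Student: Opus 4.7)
The plan is to estimate $\Omega(L, Q)$ in several regimes determined by the relationship between $r := |Q|^{1/n}$ and $|\operatorname{cen}_Q|$. A key preliminary is that $L$ is globally Lipschitz on $\mathbb{R}^n$ with constant $1/e$: $|\nabla L(x)| = 1/(|x|\log|x|) \leq 1/e$ on $\{|x|>e\}$, $L \equiv 0$ elsewhere, and the two regions glue continuously at $|x|=e$ since $\log\log e = 0$.

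When $|\operatorname{cen}_Q| \geq 4r\sqrt{n}$ and $|\operatorname{cen}_Q| \geq 2e$ (``far'' cubes lying strictly inside $\{|x|>e\}$), $|x|$ is comparable to $|\operatorname{cen}_Q|$ throughout $Q$, and the refined bound $|\nabla L(x)| \leq C/(|\operatorname{cen}_Q|\log|\operatorname{cen}_Q|)$ on $Q$ gives $\Omega(L, Q) \leq Cr/(|\operatorname{cen}_Q|\log|\operatorname{cen}_Q|)$. A short case split on which of $|\operatorname{cen}_Q|$, $|Q|$, $|Q|^{-1}$ dominates $\ell(Q)$, combined with $r/|\operatorname{cen}_Q| \leq 1/(4\sqrt{n})$ and $r\log(1/r) \leq 1/e$ on $(0,1]$, yields $\ell(Q)\Omega(L, Q) \leq C$. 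In the residual ``small'' cases where both $|\operatorname{cen}_Q|$ and $r$ are bounded, the global Lipschitz bound $\Omega(L, Q) \leq Cr$ combined with $\ell(Q) \leq C + n\log(1/r)$ suffices via the same inequality $r\log(1/r) \leq 1/e$.

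The critical regime is large cubes near the origin: $|\operatorname{cen}_Q| < 4r\sqrt{n}$ and $r$ large. Here $Q \subset B(0, Cr)$, $\ell(Q)$ is of order $\log r$, and we must show $\Omega(L, Q) \leq C/\log r$. Setting $c_0 := \log\log r$ and using the standard inequality $\Omega(L, Q) \leq (2/|Q|)\int_Q |L - c_0|\, dx$, we enlarge $Q$ to $B(0, Cr)$ and pass to polar coordinates. The $\{|x|<e\}$ part contributes at most $C\log\log r$, which is absorbed after dividing by $|Q|$. The main contribution reduces to $\int_e^r s^{n-1}(\log\log r - \log\log s)\, ds$ plus a tail over $s \in [r, Cr]$. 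Integration by parts on the principal piece with $u = \log\log r - \log\log s$ and $dv = s^{n-1}\, ds$ (so $du = -ds/(s\log s)$, $v = s^n/n$) yields
\[
\int_e^r s^{n-1}(\log\log r - \log\log s)\, ds = -\frac{e^n \log\log r}{n} + \frac{1}{n}\int_e^r \frac{s^{n-1}}{\log s}\, ds,
\]
and a second integration by parts shows that the remaining integral equals $r^n/(n\log r) + O(r^n/\log^2 r)$. The tail contributes $O(r^n/\log r)$ via the elementary bound $|\log\log s - \log\log r| \leq C/\log r$ on $[r, Cr]$. Dividing by $|Q|$ of order $r^n$ gives $\Omega(L, Q) \leq C/\log r$, as required.

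The principal obstacle is precisely this large-cube subcase: a na\"ive Lipschitz bound gives only $\Omega(L, Q) \leq Cr$, and the $L^1$-Poincar\'e inequality (in dimension one, $\int_Q |\nabla L|$ grows like $\log\log r$) is insufficient to produce the sharp $1/\log r$ decay of $\Omega(L, Q)$. The integration-by-parts computation above captures the correct logarithmic gain by exploiting that $s^n/(n\log s)$ serves as an approximate antiderivative of $s^{n-1}/\log s$.
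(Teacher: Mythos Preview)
The paper does not supply its own proof of this lemma; it is quoted verbatim from Lerner \cite[Proposition~4.2]{L05} and used as a black box. Consequently there is nothing in the paper to compare against.

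That said, your argument is correct and self-contained. The three-regime decomposition (far cubes contained in $\{|x|>e\}$, cubes with both side length and center bounded, and large cubes near the origin) covers all cubes, and each estimate goes through as you describe. In the far-cube case the gradient bound $|\nabla L(x)|\le C/(|\operatorname{cen}_Q|\log|\operatorname{cen}_Q|)$ on $Q$ does yield the claimed oscillation bound because $Q$ is convex and lies entirely in $\{|x|>e\}$; you have this implicitly via $|\operatorname{cen}_Q|\ge 2e$ and $|\operatorname{cen}_Q|\ge 4r\sqrt{n}$. In the critical large-cube case, the integration by parts reducing $\int_e^r s^{n-1}(\log\log r-\log\log s)\,ds$ to $\frac{1}{n}\int_e^r \frac{s^{n-1}}{\log s}\,ds$ plus a negative term, followed by the standard estimate $\int_e^r s^{n-1}/\log s\,ds=O(r^n/\log r)$, is exactly the right mechanism and matches in spirit what Lerner does in \cite{L05}. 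The tail over $[r,Cr]$ and the $\{|x|<e\}$ contribution are both lower order after dividing by $|Q|=r^n$, as you note.
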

\begin{theorem}[{see \cite[Theorem~1.2]{L05}}]
\label{th:Lerner} There is a positive constant $\mu_n$, depending
only on $n$, such that for any measurable function
$q:\mathbb{R}^n\to\mathbb{R}$ with
\[
0<q_-:=\operatornamewithlimits{ess\,inf}_{x\in\mathbb{R}^n}q(x),
\quad
\|q\|_{L^\infty(\mathbb{R}^n)}+\sup_{Q\subset\mathbb{R}^n}\ell(Q)\Omega(q,Q)\le \mu_n,
\]
we have $2+q\in\mathcal{M}(\mathbb{R}^n)$.
\end{theorem}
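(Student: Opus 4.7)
The plan is to prove Lerner's theorem by verifying the well-known equivalent characterization, due to L.~Diening, of the boundedness of $M$ on $L^{p(\cdot)}(\mathbb{R}^n)$ in terms of a local key estimate. Setting $p(x) := 2 + q(x)$, the hypothesis on $q$ forces $2 < p_- \le p_+ \le 2 + \mu_n < \infty$, so \eqref{eq:exponents} holds. To prove $p \in \mathcal{M}(\mathbb{R}^n)$ it then suffices to exhibit a constant $\beta \in (0,1)$ and a function $h \in L^1(\mathbb{R}^n) \cap L^\infty(\mathbb{R}^n)$ such that
\[
\beta \left( \frac{1}{|Q|} \int_Q |f(y)|\, dy \right)^{p(x)} \le \frac{1}{|Q|} \int_Q |f(y)|^{p(y)}\, dy + h(x)
\]
for every cube $Q$, every $f$ with $\|f\|_{p(\cdot)} \le 1$, and almost every $x \in Q$.

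To verify this estimate, I would split $p(x) = p_Q + (p(x) - p_Q)$, where $p_Q$ denotes the average of $p$ over $Q$, and factor
\[
\Bigl(\tfrac{1}{|Q|}\textstyle\int_Q|f|\,dy\Bigr)^{p(x)} = \Bigl(\tfrac{1}{|Q|}\textstyle\int_Q|f|\,dy\Bigr)^{p_Q}\cdot \Bigl(\tfrac{1}{|Q|}\textstyle\int_Q|f|\,dy\Bigr)^{p(x) - p_Q}.
\]
The first factor is handled by Jensen's inequality, since $p_Q \ge p_- > 1$, producing (up to errors arising from replacing $p_Q$ by $p(y)$ in the integrand) the desired term $\tfrac{1}{|Q|}\int_Q|f(y)|^{p(y)}\,dy$. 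The second factor is the crucial one. Using H\"older's inequality together with $\|f\|_{p(\cdot)} \le 1$, the average $\tfrac{1}{|Q|}\int_Q|f|\,dy$ is bounded above by a constant times $e^{C\ell(Q)}$. Combining this with the hypothesis $\Omega(q, Q) \le \mu_n / \ell(Q)$, which by Lemma~\ref{le:Lipschitz-oscillation} (or its direct analog) controls $|p(x) - p_Q|$ on average by $C\mu_n/\ell(Q)$, one obtains that the product $|p(x) - p_Q|\cdot\bigl|\log\tfrac{1}{|Q|}\int_Q|f|\bigr|$ is $\le C\mu_n$ on average; hence, for $\mu_n$ sufficiently small, the exponential-type factor is bounded by an absolute constant outside of a controllable exceptional set.

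The principal obstacle is the construction of the error function $h \in L^1\cap L^\infty$ that absorbs the pointwise gap between $|p(x)-p_Q|$ and its mean over $Q$. Unlike in the globally log-H\"older setting, we have no uniform pointwise control and must replace it with a dyadic decomposition: I would write the error as a sum over a John–Str\"omberg-type sparse family of cubes and use the uniform bound $\sup_Q \ell(Q)\Omega(q,Q) \le \mu_n$ to sum across scales. The three ingredients $|Q|$, $|Q|^{-1}$ and $|\operatorname{cen}_Q|$ in the definition of $\ell(Q)$ each play a role: $|Q|$ controls very large cubes, $|Q|^{-1}$ very small ones, and $|\operatorname{cen}_Q|$ those located far from the origin, so that the resulting dyadic sum converges and delivers an $h$ in $L^1\cap L^\infty$. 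Once the key estimate is in hand, Diening's equivalence yields that $M$ is bounded on $L^{p(\cdot)}(\mathbb{R}^n)$, so $p = 2 + q \in \mathcal{M}(\mathbb{R}^n)$, establishing the theorem.
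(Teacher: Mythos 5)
This statement is not proved in the paper at all: it is quoted verbatim from Lerner \cite[Theorem~1.2]{L05}, so the paper's ``proof'' is a citation, and your attempt must be judged as a standalone argument for Lerner's theorem. As such it has a genuine gap. The whole difficulty of the theorem is concentrated in the step you label ``the principal obstacle'': producing a single function $h\in L^1\cap L^\infty$, independent of $f$ and of $Q$, that absorbs the discrepancy between $p(x)$ and $p_Q$ in the key estimate. Under the hypothesis of the theorem you only control the \emph{mean} oscillation $\Omega(q,Q)\le\mu_n/\ell(Q)$; the set where $|p(x)-p_Q|$ is large (which John--Nirenberg makes small in measure) depends on $Q$, and every point $x$ lies in uncountably many cubes of all scales and positions. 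Your sketch asserts that a ``John--Str\"omberg-type sparse family'' summation will convert these $Q$-dependent exceptional sets into one fixed integrable $h$, but no such construction is given, and it is far from clear that the pointwise key estimate you posit even holds for exponents like $2+\alpha+\beta\sin(\log\log|x|)$, which have no limit at infinity -- precisely the case the theorem is designed to cover. A secondary inaccuracy: the key estimate with an additive $L^1$ error is a \emph{sufficient} condition used in the log-H\"older proofs (Diening, Cruz-Uribe--Fiorenza--Neugebauer), not Diening's equivalent characterization of $p\in\mathcal{M}(\mathbb{R}^n)$, which is formulated via class~$\mathcal{A}$ (uniform boundedness of averaging operators) or via boundedness of $M$ on the dual space; this does not by itself break the sufficiency direction you need, but it signals that you are importing the log-H\"older machinery into a setting where its pointwise input is unavailable.

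For comparison, Lerner's actual argument in \cite{L05} avoids any pointwise key estimate: roughly, for $f$ with modular at most one he estimates the modular of $Mf$ by writing $(Mf)^{2+q(x)}=(Mf)^2\,w(x)$ with $w=(Mf)^{q(\cdot)}$, and then exploits the Fefferman--Stein weighted inequality together with the Coifman--Rochberg theorem (powers $(Mf)^\delta$, $0<\delta<1$, are $A_1$ weights) and a John--Nirenberg-type argument showing that, when $\|q\|_\infty+\sup_Q\ell(Q)\Omega(q,Q)$ is small, the weight $w$ is uniformly well behaved. If you want to complete a proof along your lines you would have to either carry out the sparse construction of $h$ in full detail (and verify it survives the lack of a limit of $p$ at infinity), or switch to an argument of Lerner's weighted type; as written, the central step is asserted rather than proved.
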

\subsection{Proof of Theorem~\ref{th:example}}\label{sec:proof}
Let the function $L$ be as in Lemma~\ref{le:double-logarithm}.
Suppose $\alpha>\beta>0$ and put
\[
F(x):=\alpha+\beta\sin x\quad (x\in\mathbb{R})
\]
and
\[
q(y):=F(L(y)), \quad p(y):=2+q(y)\quad (y\in\mathbb{R}^n).
\]
Then
\begin{equation}\label{eq:example-1}
q_-=\alpha-\beta>0, \quad
\|q\|_{L^\infty(\mathbb{R}^n)}=\alpha+\beta<\infty.
\end{equation}
From Lemma~\ref{le:double-logarithm} we know that
\begin{equation}\label{eq:example-2}
C_L:=\sup_{Q\subset\mathbb{R}^n}\ell(Q)\Omega(L,Q)<\infty.
\end{equation}
Since $F$ is a Lipschitz function with the Lipschitz constant
equal to $\beta$, we obtain from
Lemma~\ref{le:Lipschitz-oscillation} that
\begin{align}
\sup_{Q\subset\mathbb{R}^n}\ell(Q)\Omega(q,Q) &=
\sup_{Q\subset\mathbb{R}^n}\ell(Q)\Omega(F\circ L,Q) \nonumber
\\
&\le 2\beta\sup_{Q\subset\mathbb{R}^n}\ell(Q)\Omega(L,Q)=2\beta C_L.
\label{eq:example-3}
\end{align}
From \eqref{eq:example-1}, \eqref{eq:example-3}, and $\beta<\alpha$ it follows that
\begin{equation}\label{eq:example-4}
\|q\|_{L^\infty(\mathbb{R}^n)}+\sup_{Q\subset\mathbb{R}^n}\ell(Q)\Omega(q,Q)
\le
\alpha+\beta+2\beta C_L
<
2\alpha(1+C_L).
\end{equation}

Fix $\theta\in(0,1)$ and take the function $G$ such that
\[
\frac{1}{2+F(x)}=\frac{\theta}{2}+\frac{1-\theta}{2+G(x)}\quad(x\in\mathbb{R}).
\]
Then
\[
\frac{1}{2+G(x)}
=
\frac{1}{1-\theta}\left(\frac{1}{2+F(x)}-\frac{\theta}{2}\right)
=
\frac{2-\theta(2+F(x))}{2(1-\theta)(2+F(x))}.
\]
Therefore
\begin{align*}
G(x)
&=
\frac{2(1-\theta)(2+F(x))}{2-\theta(2+F(x))}-2
\\
&=
\frac{2(1-\theta)(2+F(x))-4+2\theta(2+F(x))}{2-\theta(2+F(x))}
\\
&=
\frac{2(2+F(x))-4}{2-\theta(2+F(x))}
\\
&=
\frac{2F(x)}{2-\theta(2+F(x))}
\end{align*}
and
\[
G'(x)
=
\frac{2F'(x)(2-2\theta-\theta F(x))+2\theta F(x)F'(x)}{[2-\theta(2+F(x))]^2}
=
\frac{4(1-\theta)F'(x)}{[2-\theta(2+F(x))]^2}
\]
Since $\alpha>\beta>0$, we have $F(x)>0$ for $x\in\mathbb{R}$ and then
\begin{equation}\label{eq:example-5}
2-2\theta-\theta F(x)<2.
\end{equation}
Hence
\[
G(x)=\frac{2F(x)}{2-2\theta-\theta F(x)}>F(x).
\]
If we take $\theta=1/(2+\alpha+\beta)$, then
$\theta(2+F(x))\le\theta(2+\alpha+\beta)=1$. Therefore
$2-\theta(2+F(x))\ge 2-1=1$. Hence
\begin{equation}\label{eq:example-6}
G(x)=\frac{2F(x)}{2-2\theta-\theta F(x)}\le 2F(x)
\end{equation}
and
\begin{equation}\label{eq:example-7}
|G'(x)|
=
\frac{4(1-\theta)|F'(x)|}{[2-2\theta-\theta F(x)]^2}
\le
4(1-\theta)|F'(x)|<4|F'(x)|.
\end{equation}
Thus $G$ is Lipschitz with the Lipschitz constant equal to $4\beta$.

Put $q_1(y):=G(L(y))$ for $y\in\mathbb{R}^n$. Then from \eqref{eq:example-5}--\eqref{eq:example-6}
it follows that
\begin{equation}\label{eq:example-8}
(q_1)_-\ge\alpha-\beta>0,
\quad
\|q_1\|_{L^\infty(\mathbb{R}^n)}\le 2(\alpha+\beta).
\end{equation}
Further, from \eqref{eq:example-7} and Lemma~\ref{le:Lipschitz-oscillation}
we obtain
\begin{equation}\label{eq:example-9}
\sup_{Q\subset\mathbb{R}^n}\ell(Q)\Omega(q_1,Q)
=
\sup_{Q\subset\mathbb{R}^n}\ell(Q)\Omega(G\circ L,Q)\le 8\beta C_L.
\end{equation}
From \eqref{eq:example-8}--\eqref{eq:example-9} and $\beta<\alpha$
we deduce
\begin{equation}\label{eq:example-10}
\|q_1\|_{L^\infty(\mathbb{R}^n)}+\sup_{Q\subset\mathbb{R}^n}\ell(Q)\Omega(q_1,\Omega)
\le 2(\alpha+\beta)+8\beta C_L< 8\alpha (1+C_L).
\end{equation}
Let $\mu_n$ be the constant from Theorem~\ref{th:Lerner}. Put
\[
\varepsilon:=\frac{\mu_n}{8(1+C_L)}.
\]
If $0<\beta<\alpha\le\varepsilon$, then from \eqref{eq:example-1},
\eqref{eq:example-4} and \eqref{eq:example-8},
\eqref{eq:example-10} it follows that $q_->0$, $(q_1)_->0$, and
\[
\|q\|_{L^\infty(\mathbb{R}^n)}+\sup_{Q\subset\mathbb{R}^n}\ell(Q)\Omega(q,\Omega)\le\mu_n,
\quad
\|q_1\|_{L^\infty(\mathbb{R}^n)}+\sup_{Q\subset\mathbb{R}^n}\ell(Q)\Omega(q_1,\Omega)\le\mu_n.
\]
By Theorem~\ref{th:Lerner}, $p:=2+q\in\mathcal{M}(\mathbb{R}^n)$ and
$p_1:=2+q_1\in\mathcal{M}(\mathbb{R}^n)$. Hence,
\eqref{eq:definition}
holds with $\theta=1/(2+\alpha+\beta)$, $p,p_1\in\mathcal{M}(\mathbb{R}^n)$ and $p_0=2$.
Thus $p\in\mathcal{M}^*(\mathbb{R}^n)$.
\qed

\end{document}